\DeclareMathOperator{\Co}{co}
\DeclareMathOperator{\Expectation}{E} 
\DeclareMathOperator{\Ideal}{Ideal}
\DeclareMathOperator{\Image}{im} 
\DeclareMathOperator{\Prob}{P}
\DeclareMathOperator{\Span}{Span}
\DeclareMathOperator{\Rank}{Rank}
\DeclareMathOperator{\Supp}{Supp}
\newcommand{\convexof}[1]{\Co\left(#1\right)}
\newcommand{\euler}{\mathrm{e}}
\newcommand{\expectat}[2]{{\Expectation}_{#1}\left[#2\right]}
\newcommand{\expof}[1]{\exp\left(#1\right)}
\newcommand{\idealof}[1]{\Ideal\left(#1\right)}
\newcommand{\integers}{\mathbb{Z}}
\newcommand{\lattice}[1]{\mathcal L^\star \left(#1\right)}
\newcommand{\probof}[1]{\Prob\left(#1\right)}
\newcommand{\rationals}{\mathbb{Q}}
\newcommand{\reals}{\mathbb{R}}
\newcommand{\rankof}[1]{\Rank\left(#1\right)}
\newcommand{\setof}[2]{\left\{#1 : #2 \right\}}
\newcommand{\set}[1]{\left\{#1\right\}}
\newcommand{\spanof}[1]{\Span\left(#1\right)}
\newcommand{\sspace}[1]{\mathcal #1}
\newcommand{\supp}[1]{\Supp{#1}}
\begin{document}

\title*{A note on the border of an exponential family 
}
\titlerunning{The border of an exponential family}
\author{Luigi Malag\`o and Giovanni Pistone}
\authorrunning{L. Malag\`o, G. Pistone}
\institute{Luigi Malag\`o \at DEI Politecnico di Milano, Italy. \email{malago@elet.polimi.it}
\and Giovanni Pistone \at Collegio Carlo Alberto, Moncalieri, Italy. \email{giovanni.pistone@gmail.com}}
\maketitle

\abstract{Limits of densities belonging to an exponential family appear in many applications, {e.g.} Gibbs models in Statistical Physics, relaxed combinatorial optimization, coding theory, critical likelihood computations, Bayes priors with singular support, random generation of factorial designs. We discuss the problem from the methodological point of view in the case of a finite state space. We prove two characterizations of the limit distributions, both based on a suitable description of the marginal polytope (convex hull of canonical statistics' values). First, the set of limit densities is equal to the set of conditional densities given a face of the marginal polytope. Second, in the lattice case there exists a parametric presentation, in monomial form, of the closure of the statistical model.}

\keywords{Algebraic Statistics, Convex Support, Extended Exponential Family, Statistical Modeling.}

\section{Background}
\label{sec:BG}
We consider the \emph{exponential family} defined by the family of densities 
 \begin{equation}
  \label{eq:1}
  p(x;\theta) = \expof{\sum_{j=1}^m \theta_j T_j(x) - \psi(\theta)}, \quad \theta \in \reals^m,
\end{equation}
on a finite state space $(\sspace X,\mu)$ with $n = \# \sspace X$ points and reference measure $\mu$. Many monographs have been devoted to the study of this important class of statistical models, {e.g.} \cite{barndorff-nielsen:78,brown:86,letac:92}. In this section we have collected facts from this theory and its algebraic version in order to
introduce to our result discussed in Section \ref{sec:hilbert}.

Different exponential families could represent the same statistical model. Consider the orthogonal decomposition $\spanof{1,T_1,\dots,T_m} = 1\oplus V \subset L^2(\sspace X,\mu)$. In fact, $V \subset L_0^2(\sspace X,\mu) $. For each density $p$ in the exponential family \eqref{eq:1} there exists a unique $v \in V$ such that $p(x) = \euler^{v - K_\mu(v)}$, see \cite{pistone|sempi:95,cena|pistone:2007}. 

The \emph{canonical statistics}
\begin{equation*}
  T = (T_1,\dots,T_m) \colon \sspace X \to \sspace Y = T(\sspace X) \subset \reals^m
\end{equation*}
map the statistical model \eqref{eq:1} to the \emph{canonical exponential family} 
\begin{equation}
  \label{eq:1-S}
  p(y;\theta) = \expof{\sum_{j=1}^m \theta_j y_j - \psi(\theta)}, \quad \theta \in \reals^m,
\end{equation}
where the new state space is $(\sspace Y,\nu)$, with $\nu = \mu \circ T^{-1}$. In Equation \eqref{eq:1-S}, the canonical statistics are coordinate projections $y \mapsto y_j$, $j=1,\dots,m$.

\subsection{Monomial and implicit presentations}

Other useful parameterization of the exponential family \eqref{eq:1} are available, in particular the mean parameterization which shall be discussed in Section \ref{sec:marginal}. In this paper we focus on a less known parameterization, i.e. the \emph{monomial parameterization}, which is obtained from \eqref{eq:1} by introducing the exponentials $t_j = \euler^{\theta_j}$ of each canonical parameter $\theta_j$, $j=1,\dots,m$,
 \begin{equation}
  \label{eq:1-T}
  p(x;t) \propto \prod_{j=1}^m t_j^{T_j(x)}, \quad t \in \reals_>^m.
\end{equation}
This presentation is especially useful in the lattice case, {i.e.} when the canonical statistics are integer valued. This is the case which has been studied with the methods of Algebraic Statistics, see {e.g.} \cite[Sec.  6.9]{pistone|riccomagno|wynn:2001}, \cite{geiger|meek|sturmfels:2006}.

While Equations \eqref{eq:1} and \eqref{eq:1-T} are equivalent for positive densities, an interesting phenomenon appears if the conditions $t_j >0$ are relaxed to $t_j \ge 0$. In such a case, \eqref{eq:1-T} makes sense and an extension of the original model is obtained, see \cite{rapallo:2003thesis,rapallo:2007toric}. For example, assume we let just one of the $t_j$'s, say $t_1$, to be zero. It follows that the corresponding unnormalized density is zero if $T_1(x) \ne 0$ and is positive for $T_1(x) = 0$, giving rise to densities with support $\set{T_1 = 0}$ which form a new exponential family. Thus, the exponential family \eqref{eq:1} is extended to include exponential families with defective support. Unfortunately, such extension depends on the canonical statistics used to describe the statistical model as an exponential family. For example, if the chosen canonical statistics are never zero, no such extension is possible. 

Statistical models of type \eqref{eq:1} admit an \emph{implicit representation}, see \cite{pistone|riccomagno|wynn:2001,pistone:2009SL}. Let $1 \oplus V = \spanof{T_0=1, T_1, \dots, T_m}$ be the linear space generated by the canonical statistics together with the constant $1$, and let $w_1, \dots, w_l$ be a linear basis of the orthogonal space $(1\oplus V)^\perp$, {i.e.}, $1, T_1,\dots,T_m,w_1,\dots,w_l$ is a linear basis of  $L^2(\sspace X,\mu)$ and
\begin{equation*}
  \sum_{x \in \sspace X} w_i(x) T_j(x) \mu(x) = 0, \quad i=1,\dots,l,\quad j=0,\dots,m.
\end{equation*}
If we introduce the $(m+1)\times n$ matrix $A = [T_j(x)\mu(x)]$, $j=0,\dots,m$, $x \in \sspace X$, $T_0 = 1$, then $\spanof{w_1, \dots, w_l} = \ker A$. The case where $A$ is integer valued is discussed in \cite{geiger|meek|sturmfels:2006}. The general case is discussed in \cite{rauh|kahle|ay:09}.

Since $\log p(\cdot;\theta)$ is an affine function of the canonical statistics $T_j$'s, a density $p$ belongs to the exponential model \eqref{eq:1} if and only if $p$ is a positive density of $(\sspace X,\mu)$ and
\begin{equation}
  \label{eq:1-D}
\sum_{x\in\sspace X} w(x) \mu(x) \log p(x) = 0, \quad w \in \spanof{w_1,\dots,w_l}.
\end{equation}
More precisely, if $p=p(\cdot;\theta)$ in \eqref{eq:1} for a $\theta$, then \eqref{eq:1-D} holds true; vice versa, if $\sum_{x\in\sspace X} w(x) \mu(x) \log p(x) = 0$ holds true for $w = w_i$, $i=1,\dots,l$, then $p=p(\cdot;\theta)$ for some $\theta$.

Equation \eqref{eq:1-D} is equivalent to the following equation
\begin{equation}
  \label{eq:1-M}
\prod_{x\in\sspace X} p(x)^{w(x)} = 1, \quad {w}/{\mu} \in \spanof{w_1,\dots,w_l},
\end{equation}
or, clearing the denominators,
\begin{equation}
  \label{eq:1-MM}
\prod_{x\colon w(x) > 0} p(x)^{w^+(x)} = \prod_{x\colon w(x) < 0} p(x)^{w^-(x)}, \quad w/{\mu} \in \spanof{w_1,\dots,w_l},
\end{equation}
where $w=w^+-w^-$ and $w^+, w^- \ge 0$.
Equation \eqref{eq:1-MM} makes sense outside the exponential model, {i.e.} if we assume $p(x) \ge 0$. Assume $\sspace X_0 = \supp p$ is strictly contained in $\sspace X$ and satisfies Equation \eqref{eq:1-M}. Therefore, $p$ belongs to the exponential model associated to the space $V_0$, with $1\oplus V_0 = \spanof{w_{1|\sspace X_0},\dots,w_{l|\sspace X_0}}^\perp \subset L^2\left(\sspace X_0, \mu_{|\sspace X_0}\right)$.  
\subsection{Toric statistical models}
From now on we assume that the $m\times\sspace X$ matrix $A = [T_j(x)]_{j = 1,\dots,d; x\in\sspace X}$, is nonnegative integer valued. The nonnegativity assumption does not restrict the class of model we consider. We define
\begin{equation*}
  \lattice A = \setof{y \in \integers^{\sspace X}}{y\ne 0, Ay = 0}
\end{equation*}
be the \emph{lattice} of $A$. We denote by $A(x)$, $x \in \sspace X$, the columns of $A$. The model \eqref{eq:1-T} is written $p(x;t)=t^{A(x)}=t_1^{A_1(x)}\cdots t^{A_m(x)}$ and it is called \emph{$A$-model}.

Consider  the homomorphism $\tau$ from the polynomial ring $\rationals[q(x) \colon x \in \sspace X]$ into the polynomial ring $\rationals[t_j \colon j = 1,\dots,d]$ defined by
\begin{equation*}
\tau \colon q(x) \mapsto \prod_{j=1}^d t_j^{T_j(x)} = t^{A(x)}, \quad x \in \sspace X.
\end{equation*}

The kernel of $\tau$ is a polynomial ideal $\idealof A$, which is called the \emph{toric ideal} of $A$. It is proved in \cite{sturmfels:1996} that $\idealof A$ is generated as a vector space by the binomials
\begin{equation*}
  \prod_{x\in\sspace X} q(x)^{w_+(x)} - \prod_{x\in\sspace X} q(x)^{w_-(x)}, \quad k \in \lattice A
\end{equation*}
and it is generated as an ideal by a finite subset of such binomials, i.e. the binomials where $k$ is an element of the Graver basis of $\lattice A$. Note that the binomials are homogeneous if, and only if, $1 \in \spanof A$.

Assume now that $t_1,\dots,t_d$ take nonnegative and not all zero real values and consider the parameterization
\begin{equation*}
q(x) = t^{A(x)}, \quad x \in \sspace X, \quad t \in Q = \reals_+^d \setminus \set{0}.  
\end{equation*}

Note that $t^{A(x)} = \prod_{j \colon A_j(x) \ne 0} t_j^{A_j(x)}$. Each $q(x)$ is nonnegative and strily positive if $t_1,\dots,t_d > 0$. Let $I$ be a subset of indices, $I \subset \set{1,\dots,d}$ such that $t_j=0$ for all $j \in I$. Then $q(x) \ne 0$ for all $x \in \sspace X$ such that $A_j(x) = 0$, $j\in I$. 

There exists at least one $x \in \sspace X$ where $q(x) \ne 0$ if, and only if, each column of $A$ contains at least one zero. In such a case, we have defined a parameterization of unnormalized probabilities $q$ with parameters in the vertex-less quadrant:
\begin{equation*}
  Q \ni t \mapsto p(x;t) =  \frac{t^{A(x)}}{\sum_{x\in\sspace X} t^{A(x)}} 
\end{equation*}

Let us study the confounding induced by such a parameterization on
strictly positive parameters. If
\begin{equation*}
  \frac{s^{A(x)}}{\sum_{x\in\sspace X} s^{A(x)}} = \frac{t^{A(x)}}{\sum_{x\in\sspace X} t^{A(x)}}, \quad x \in \sspace X,
\end{equation*}
then the unnormalized probabilities are proportional and 
\begin{equation*}
  \prod_{j = 1}^d \left(\frac{s_j}{t_j}\right)^{A_j(x)} = \text{constant}, x \in \sspace X.
\end{equation*}
or
\begin{equation*}
  \sum_{j=1}^d \left(\log s_j - \log t_j \right) A_j(x) = \text{constant}.
\end{equation*}
If, and only if, $1 \in \rankof A$, there exists vectors $c = (c_1,\dots,c_d)$ such that $A(x)c = \text{constant}$ and $\log s_j - \log t_j = c_j$, $j = 1,\dots,d$ or $s_j = \euler^{c_j} t_j$. Confounding is reduced to the confounding of uniform probability.

\subsection{Trace, closure, marginal polytope}\label{sec:marginal}

In the present section we discuss two general methods under which the reduction of the support appears, namely the trace operation and the limit operation. For each event $\mathcal S \subset \sspace X$, the \emph{trace} on $\mathcal S$ of the exponential family in \eqref{eq:1} is the exponential family defined on $(\mathcal S,\mu_{|S})$ by conditioning on $\mathcal S$.

We denote by $\mathcal M_>$ the convex set of strictly positive densities and by $\mathcal M_\ge$ the convex set of densities. Both sets are endowed with the weak topology, {i.e.}, if $p_n$, $n=1,2,\dots$, and $p$ are densities, then $\lim_{n \to \infty}p_n = p$ means $\lim_{n\to\infty} p_n(x) = p(x)$ for all $x \in \sspace X$. In general, the exponential model \eqref{eq:1} is not closed in the weak topology. The \emph{extended exponential family} is the closure in the weak topology of an exponential family \eqref{eq:1}. An extended exponential family according to this definition is a set of densities. A proper parameterization of the extended family requires the use of the expectation parameters and the identification of their range.

\begin{definition}
The \emph{convex support}, cf. e.g \cite{cenkov:1982,brown:86,csiszar|matus:2005}, or \emph{marginal polytope}, see \cite{wainwright|jordan:2008}, and also \cite{kahle:10}, of the exponential family \eqref{eq:1} is the convex hull of $\sspace Y = T(\sspace X)$, 
  \begin{equation*}
    \convexof{\Image T} = \setof{\eta \in \reals^m, \eta = \sum_{j=1}^m \lambda_j t_j}{\lambda_j \ge 0, \sum_{j=1}^m \lambda_j = 1}.
  \end{equation*}
\end{definition}

The previous set-up covers the behavior of the exponential family and its parameterization with the expectation parameters in the interior of the marginal polytope, see \cite{brown:86}. The discussion of the parameterization of the extended family requires the notion of exposed subset.

\begin{definition} 
\label{def:2}
\begin{enumerate}
\item A \emph{face} of the marginal polytope $M$ is a subset $F \subset M$ such that there exists an affine mapping $A\colon \reals^m \ni t \mapsto A(t) \in \reals$ which is zero on $F$ and strictly positive on $M\setminus F$.   
\item
A subset $S \subset \sspace X$ is \emph{exposed for the exponential family} \eqref{eq:1-S} if $S = T^{-1}(F)$ and $F$ is a face of the marginal polytope.
\end{enumerate}
\end{definition}

The following theorem is a minor improvement of known results.

\begin{theorem}\label{th:supports}
Let $\theta_n$, $n=1,2,\dots$, be a sequence of parameters in Equation \eqref{eq:1} such that for some $q \in \mathcal M_\ge$ we have $\lim_{n \to \infty} p(x;\theta_n) = q(x)$, {i.e.}, $q$ belongs to the extended exponential model.
\begin{enumerate}
\item\label{th:supports1} If the support of $q$ is full, $\set{q > 0} = \mathcal X$, then $q$ belongs to the exponential family \eqref{eq:1} for some parameter value $\theta=\lim_{n\to\infty} \theta_n$.  
\item \label{th:supports2} If the support of $q$ is defective, then the sequence $\theta_n$ is not convergent, $\supp q = \set{q > 0}$ is an exposed subset of $\sspace X$, and $q$ belongs to the trace of the exponential family on the reduced support.
\end{enumerate}
\end{theorem}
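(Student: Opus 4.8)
The plan is to handle both parts simultaneously through the expectation parameter. Put $\eta_n = \expectat{\theta_n}{T} = \sum_{x\in\sspace X} T(x)\,p(x;\theta_n)\,\mu(x)$; since $p(\cdot;\theta_n)\to q$ pointwise on the finite set $\sspace X$, we get $\eta_n\to\eta := \expectat{q}{T} = \sum_{x} T(x)\,q(x)\,\mu(x)$, which lies in the marginal polytope $M = \convexof{\Image T}$ because $q\in\mathcal M_\ge$. Let $F$ be the unique face of $M$ (in the sense of Definition~\ref{def:2}, with $F=M$ allowed) whose relative interior contains $\eta$: this is well defined because the relative interiors of the faces partition $M$, and because every face in the sense of Definition~\ref{def:2} is an extreme convex subset of $M$ (if an affine $A$ is $\ge 0$ on $M$ and $0$ on $F$, and $\eta\in F$ lies strictly between $a,b\in M$, then $A(a)=A(b)=0$, so $a,b\in F$).

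First I record that $\supp q\subseteq\mathcal X_1:=T^{-1}(F)$. Indeed $\eta=\sum_{x\in\supp q}\bigl(q(x)\mu(x)\bigr)T(x)$ is a convex combination of the points $T(x)$, $x\in\supp q$, all coefficients strictly positive; since $F$ is an extreme subset of $M$ containing $\eta$, each such $T(x)$ lies in $F$, i.e.\ $x\in\mathcal X_1$. Next, because the vertices of the polytope $M$ belong to $\Image T$ and $F$ is the convex hull of the vertices of $M$ it contains, one gets $F=\convexof{T(\mathcal X_1)}$; hence the trace exponential family on $(\mathcal X_1,\mu_{|\mathcal X_1})$ with canonical statistics $T_{|\mathcal X_1}$ has marginal polytope exactly $F$, and $\eta$ belongs to its relative interior.

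Now condition on $\mathcal X_1$. The densities $p'_n:=p(\cdot\,;\theta_n\mid\mathcal X_1)$ are members of that trace exponential family by the very definition of the trace; their expectation parameters $\expectat{p'_n}{T}$ converge to $\eta$ (both the numerator $\sum_{x\in\mathcal X_1}T(x)p(x;\theta_n)\mu(x)$ and the normalising denominator $\sum_{x\in\mathcal X_1}p(x;\theta_n)\mu(x)$ pass to the limit, the latter to $\sum_{x\in\mathcal X_1}q(x)\mu(x)=1$ since $\supp q\subseteq\mathcal X_1$), while simultaneously $p'_n(x)\to q(x)$ for every $x$, again because $q$ is supported in $\mathcal X_1$. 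Invoking the fact recalled after Definition~\ref{def:2} that on a finite state space the expectation parameterisation is a homeomorphism from the relative interior of the marginal polytope onto the exponential family, $p'_n$ converges to the unique member $p^\star$ of the trace family with mean $\eta$, and $p^\star$ is strictly positive on all of $\mathcal X_1$. By uniqueness of weak limits $q=p^\star$. This establishes part~\ref{th:supports2}: $\supp q=\supp p^\star=\mathcal X_1=T^{-1}(F)$ is exposed, and $q$ lies in the trace family on its own support; and the non-convergence of $\theta_n$ is then automatic, since a convergent $\theta_n$ would give $p(\cdot;\theta_n)\to p(\cdot;\lim\theta_n)$, which is strictly positive and cannot equal a defective $q$. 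Part~\ref{th:supports1} is the special case $\supp q=\sspace X$: then $\eta\in\mathrm{ri}(M)$, so $F=M$, $\mathcal X_1=\sspace X$, $p'_n=p(\cdot;\theta_n)$, and $q=p^\star$ already belongs to \eqref{eq:1}; if the parameterisation is identifiable, the homeomorphism $\theta\mapsto p(\cdot;\theta)$ further yields $\theta_n\to\theta$ with $q=p(\cdot;\theta)$.

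The step I expect to be the real obstacle is the inclusion $T^{-1}(F)\subseteq\supp q$ — that the limit is supported on the \emph{whole} exposed set, not a proper subset of it. Convex geometry alone does not decide this (a priori $q$ might live on fewer points while still having mean in $\mathrm{ri}(F)$); the argument genuinely relies on the analytic input that the mean parameterisation of the trace family on $\mathcal X_1$ is a homeomorphism onto that family, which forces $p^\star=\lim p'_n$ to be strictly positive on $\mathcal X_1$. A minor but necessary point is the bookkeeping that $F=\convexof{T(\mathcal X_1)}$ and that the face notion of Definition~\ref{def:2} is the extreme-subset notion, which is what licenses the partition into relative interiors used above.
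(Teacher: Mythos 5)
Your argument is correct, but it takes a genuinely different route from the paper's. The paper works on the implicit side: starting from \eqref{eq:1-D}, $\sum_x w(x)\mu(x)\log p(x;\theta_n)=0$ for $w\perp\spanof{1,T_1,\dots,T_m}$, it lets $n\to\infty$ and observes that finiteness of the limit forbids any element of $\spanof{w_1,\dots,w_l}$ from being nonnegative and nonzero on $\set{q=0}$; a theorem of the alternative then produces a nonnegative element of $\spanof{1,T_1,\dots,T_m}$ vanishing exactly on $\supp q$, which is the affine function exhibiting the support as an exposed set, and membership in the trace family follows because $\log q$ restricted to the support is a limit in the finite-dimensional span of the $T_j$'s. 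You instead work on the mean side: $\eta=\lim_n\expectat{\theta_n}{T}$, the face $F$ with $\eta$ in its relative interior, the inclusion $\supp q\subset T^{-1}(F)$ by extremality, and then conditioning on $T^{-1}(F)$ and using the mean-parameter homeomorphism of the trace family to identify $q$ with its strictly positive member of mean $\eta$. The paper's duality argument is elementary linear algebra and constructs the exposing affine function explicitly (the object reused in Theorem \ref{th:hilbert}); your argument avoids the alternative theorem, identifies exactly which face occurs (the one carrying $\lim\eta_n$), and gets positivity of $q$ on the whole exposed set in one stroke.

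Two facts you rely on should be cited explicitly. First, you need the face of $M$ whose relative interior contains $\eta$ to be a face in the sense of Definition~\ref{def:2}, i.e.\ exposed; you only prove the converse inclusion (Definition~\ref{def:2} faces are extreme convex subsets). Since $M$ is a polytope every face is exposed, so this is standard, but it is essentially the content the paper derives by hand via the alternative theorem, so it cannot be left tacit. Second, the homeomorphism between the relative interior of the convex support and the (trace) exponential family is the Brown-type result that carries your part~\ref{th:supports2}; the paper invokes the same fact, on $M^\circ$ only, in its part~\ref{th:supports1}. Neither point is a substantive gap.
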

\begin{proof}
Let $\sspace X_0 = \setof{x \in \sspace X}{q(x) > 0}$, $\sspace X_1 = \setof{x \in \sspace X}{q(x)=0}$. For each $x \in \sspace X_0$, we have $\lim_{n \to \infty} \log p(x;\theta_n) = \log q(x)$ by continuity; for each $x \in \sspace X_1$, we have $\lim_{n \to \infty} \log p(x;\theta_n) = -\infty$. From \eqref{eq:1-D} we get
\begin{equation}\label{eq:argument}
  \sum_{x \in \sspace X_0} \log q(x) k(x) \mu(x) + \lim_{n \to \infty} \sum_{x \in \sspace X_1} \log p(x;\theta_n) k(x) \mu(x) = 0, 
\end{equation}
with $k \in \spanof{w_1,\dots,w_l}$. 
\begin{enumerate}
\item If the set $\sspace X_1$ is empty, then $q$ belongs to the exponential model because Equation \eqref{eq:argument} reduces to \eqref{eq:1-D}. The convergence $\lim_{n\to\infty} \eta_n = \lim_{n\to\infty} \expectat {\theta_n} T = \expectat q T = \eta$ in $M^\circ$ implies the convergence of the $\theta$ parameters ($\mod$ the identifiability constraints).
\item If the set $\sspace X_1$ is not empty, the second term of the LHS of~\eqref{eq:argument} has to be finite, so that no linear combination of the $w_i$'s can be definite in sign. Otherwise, the limit would diverge. In other words, the problem
\begin{equation} \label{eq:3}
 k : \sspace X_1\ni x \mapsto \sum_{i=1}^l \lambda_i w_i(x) \ge 0 \text{ and $k \neq 0$ for at least one $x$}
\end{equation}
is not satisfiable. By the Theorem of the alternative, see {e.g.} \cite[Ch. 15]{roman:2005}, the non satisfiability of \eqref{eq:3} is equivalent to the existence of a positive solution $u^{(1)}(x) > 0$, $x \in \sspace X_1$, to the problem
\begin{equation*}
  \label{eq:4}
  \sum_{x \in \sspace X_1} u^{(1)}(x) k(x) \mu(x) = 0, \quad k \in \spanof{w_1,\dots,w_l}.
\end{equation*}
The random variable
\begin{equation*}
  u(x) =
  \begin{cases}
    0 & \text{if $x \in \sspace X_0$}, \\
    u^{(1)}(x) & \text{if $x \in \sspace X_1$},
  \end{cases}
\end{equation*}
is orthogonal to all $w_i$'s, so that there exist $a_0, a_1, \dots, a_m$ such that
\begin{equation}
  \label{eq:5}
  u(x) = a_0 + \sum_{j=1}^m a_j T_j(x).
\end{equation}
The conclusion on the support now follows from \eqref{eq:5}. In fact, for each $t \in \sspace Y$ such that $T^{-1}(t) \in \sspace X_1$ the linear function $a_0+\sum_j a_j t_j$ is positive, while for each $t$ such that $T^{-1}(t) \in \sspace X_0$ takes value zero, so that the points in $\sspace X_1$ are the points of an exposed set of the face of $M$ identified by \eqref{eq:5}.

Finally, on the support of $q$, $\log q$ is a linear combination of the $T_j$'s being a limit in the linear space generated by those functions.\qed
\end{enumerate}
\end{proof}

\begin{theorem}\label{th:malago}
If $q$ belongs to the trace of the exponential family \eqref{eq:1} with respect to an exposed subset $S$, then $q$ belongs to the extended exponential model.  
\end{theorem}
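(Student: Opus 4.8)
The plan is to exhibit, for the given $q$, an explicit sequence $\theta_n$ of canonical parameters (divergent precisely when the support is defective) along which $p(\cdot;\theta_n)$ converges in the weak topology to $q$, using the affine functional that exposes $S$ as a perturbation direction which annihilates $\sspace X\setminus S$ while preserving the conditional law on $S$.

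First I would unpack the two hypotheses. Since $S$ is exposed, $S = T^{-1}(F)$ for a face $F$ of the marginal polytope $M$; pick an affine functional $A(\eta) = a_0 + \sum_{j=1}^m a_j \eta_j$ vanishing on $F$ and strictly positive on $M\setminus F$, and set $u(x) = A(T(x)) = a_0 + \sum_{j=1}^m a_j T_j(x)$. Because $T(\sspace X)\subseteq M$ and $T(x)\in F$ exactly when $x\in S$, the function $u$ vanishes on $S$ and is strictly positive on $\sspace X\setminus S$. On the other hand, $q$ lying in the trace on $S$ means (after extending by zero off $S$) that $\supp q = S$ and there is $\theta^{*}\in\reals^m$ with $q(x) = \expof{\sum_{j=1}^m \theta_j^{*} T_j(x) - \psi_S(\theta^{*})}$ for $x\in S$, where $\euler^{\psi_S(\theta^{*})} = \sum_{y\in S}\expof{\sum_{j=1}^m \theta_j^{*} T_j(y)}\mu(y)$ is finite and strictly positive since $S$ is finite and nonempty.

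Next I would set $\theta_n = \theta^{*} - n\,a$ with $a=(a_1,\dots,a_m)$ and compute. Substituting $\sum_j a_j T_j(x) = u(x) - a_0$, the exponent of the unnormalized density is $\sum_j \theta_j^{*} T_j(x) - n u(x) + n a_0$; the additive constant $n a_0$ is independent of $x$ and cancels between numerator and partition function, giving
\begin{equation*}
  p(x;\theta_n) = \frac{\expof{\sum_{j=1}^m \theta_j^{*} T_j(x)}\, \euler^{-n u(x)}}{\sum_{y \in \sspace X} \expof{\sum_{j=1}^m \theta_j^{*} T_j(y)}\, \euler^{-n u(y)}\, \mu(y)}.
\end{equation*}
As $n\to\infty$, each factor $\euler^{-n u(x)}$ converges to the indicator of $S$ at $x$, because $u$ is zero on $S$ and positive elsewhere; hence the numerator tends to $\expof{\sum_j \theta_j^{*} T_j(x)}$ when $x\in S$ and to $0$ when $x\notin S$, while the denominator tends to $\sum_{y\in S}\expof{\sum_j \theta_j^{*} T_j(y)}\mu(y) = \euler^{\psi_S(\theta^{*})}\in(0,\infty)$. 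Therefore $\lim_n p(x;\theta_n) = q(x)$ for every $x\in\sspace X$, which is exactly weak convergence, so $q$ belongs to the extended exponential model.

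I do not anticipate a genuine obstacle here: the whole content is the choice of the perturbation direction $-a$. The two points needing a little care are that the constant term $a_0$ must be carried through the computation so that it cancels in the normalization (otherwise the partition function could diverge or vanish), and the elementary but essential remark that $u = A\circ T$ inherits the sign pattern of $A$ on $M$ exactly because $T(\sspace X)\subseteq M$ with $T^{-1}(F)=S$. It is also worth noting that $\theta_n$ diverges whenever $S\ne\sspace X$ (then $a\ne 0$ and $u$ is non-constant), consistently with part \ref{th:supports2} of Theorem \ref{th:supports}; when $F=M$ the sequence is constant and $q=p(\cdot;\theta^{*})$ already lies in the family.
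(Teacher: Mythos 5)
Your proposal is correct and is essentially the paper's own argument: the paper likewise fixes a parameter $\bar\theta$ realizing $q$ on $S$, perturbs it along the coefficients of the exposing affine functional (a one-dimensional Gibbs sub-model $\beta\mapsto \beta(A-\alpha_0)+\sum_j\bar\theta_j T_j$), and lets $\beta\to-\infty$, which is exactly your sequence $\theta_n=\theta^{*}-na$. The only difference is cosmetic: the paper invokes the boundary behavior of the one-dimensional exponential family whose statistic attains its minimum on $S$, whereas you carry out the limit computation explicitly, with the same cancellation of the constant term $a_0$ in the normalization.
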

\begin{proof} We generate sequences that admit as limit a generic density in the trace model by considering a one-dimensional (Gibbs) sub-model. Let $F$ be the face of the marginal polytope such that $S=T^{-1}(F)$ and let $A$ be an affine function such that $A(\eta)=0$ for $\eta\in F$ and $A(\eta)>0$ for $\eta\in M\setminus F$. We can chose $A$ such that $A\circ T$ belongs to the space generated by $1,T_1,\dots,T_m$, {i.e.} $A\circ T = \alpha_0 + \sum_{j=1}^m \alpha_j T_j$. We can take $\alpha_0=0$ if $1 \in \spanof{T_j\colon j=1,\dots,m}$.

Let $\bar\theta$ be a value of the canonical parameter such that 
  \begin{equation*}
    q(x) = 
    \begin{cases}
      \frac{\expof{\sum_{j=1}^m \bar\theta_j T_j(x)}}{\sum_{x\in S} \expof{\sum_{j=1}^m \bar\theta_j T_j(x)} \mu(x)} & \text{if $x\in S$}, \\ 0 & \text{if $x\in\sspace X\setminus S$.}
    \end{cases}
  \end{equation*}
For $\beta \in \reals$,
\begin{equation*}
  \beta A + \sum_{j=1}^m \bar\theta_j T_j = \sum_{j=1}^m (\beta\alpha_j+\bar\theta_j) T_j + \beta \alpha_0,
\end{equation*}
so that the one-dimensional statistical model 
\begin{equation*}
  p_{\beta} = \expof{\beta (A-\alpha_0) + \sum_{j=1}^m \bar\theta_j T_j - \psi(\beta\alpha+\bar\theta)}, \quad \beta\in\reals, 
\end{equation*}
is a sub-model of \eqref{eq:1}. The family of densities
\begin{equation*}
  \frac{p_\beta}{p_0} = \expof{\beta(A-\alpha_0)-\left(\psi(\beta\alpha+\bar\theta)-\psi(\bar\theta)\right)}
\end{equation*}
is a one-dimensional exponential family whose canonical statistics $A-\alpha_0$ reaches its minimum value $-\alpha_0$ on $S$. Therefore, if $\beta_n\to-\infty$, $n\to\infty$, its limit is the uniform distribution on $S$ and, consequently,  $p_{\beta_n}$ is convergent to $q$. \qed
\end{proof}

\section{Extended families}\label{sec:hilbert}
In this section we assume the exponential family \eqref{eq:1} to be of lattice type, {i.e.} we assume that the $m\times n$ matrix $A = [T_j(x)\mu(x)]$, $j=1,\dots,m$ and  $x \in \sspace X$, is non-negative integer valued. Hence, the exponential family can be written as in Equation \eqref{eq:1-T} and takes the monomial parametric form
 \begin{equation}
  \label{eq:1-TT}
  p(x;\zeta) \propto \prod_{j \colon A_j(x) > 0} \zeta_j^{A_j(x)}, \quad \zeta_j\ge0, \quad j=1,\dots,m.
\end{equation}
In \cite{geiger|meek|sturmfels:2006} the statistical model \eqref{eq:1-TT} is called the \emph{$A$-model}, see also \cite{drton|sturmfels|sullivan:2009}. If all $\zeta_j$'s are positive, then \eqref{eq:1-TT} is the exponential family with a different parameterization. If we let one, or more, of the $\zeta_j$'s to be zero, either the monomials in \eqref{eq:1-TT} are zero for all $x \in \sspace X$, in which case no probability is defined, or the monomials are non-zero for some $x$, giving rise to a statistical model with restricted support, see the discussion in \cite{geiger|meek|sturmfels:2006}. 

Each integer vector $k$ such that $Ak=0$, {i.e.} $k \in \ker_{\integers} A$, splits into its positive and negative part, $k = k^+ - k^-$, and we have
\begin{equation}\label{eq:bin-T}
  \prod_{x\colon k^+(x) > 0} p(x,\zeta)^{k^+(x)} = \prod_{x\colon k^-(x) > 0} p(x;\zeta)^{k^-(x)}, \quad k \in \ker_\integers A.
\end{equation}
The statistical model defined by the infinite system of binomial equations \eqref{eq:bin-T} is called the \emph{toric model} of $A$, as defined in \cite{pistone|riccomagno|wynn:2001}. Again, if all the probabilities in \eqref{eq:bin-T} are positive, then the toric model is just the exponential family. If some probabilities are zero, then the toric model implies the $A$-model. In fact, substitution of \eqref{eq:1-TT} into \eqref{eq:bin-T} leads to an algebraic identity, without any restriction on the parameters $\zeta_j$. 

The existence of a finite generating set for Equation \eqref{eq:bin-T} is discussed in details in \cite{geiger|meek|sturmfels:2006}, see also \cite{drton|sturmfels|sullivan:2009}. Moreover, in \cite{geiger|meek|sturmfels:2006} it is proved that each probability in the extended exponential family satisfies \eqref{eq:bin-T}. We shall obtain a related result in a different way.

Consider a second $l \times n$ matrix $B$ with the same integer $\ker$ as $A$. The exponential model would be the same, but the border cases of the $A$-model could be different then the border cases of the $B$-model. The problem of finding a suitable maximal monomial model was considered first in \cite{rapallo:2007toric} and it is fully discussed in \cite{rauh|kahle|ay:09}. Rapallo's method has been applied in \cite{consonni|pistone:2007} to the Bayesian analysis of tables with structural zeros. Here, we show that all of the extended exponential family is actually parameterized by this maximal monomial model. For a related approach see also \cite{rinaldo|feinberg|Zhou:2009}. 

The maximality of the monomial model is defined as follows. Consider the model matrix $A \in \integers^{m\times n}$. If the constant vector $1$ does not belong to the row space switch to the matrix $[{\mathbf 1} A]$ which defines the same exponential model.  Let the column span of the orthogonal matrix $K = [w_1 \cdots w_l] \in \integers^{n\times l}$ be $\ker_\rationals A$. The integer matrix $K$ can be computed by a symbolic algebra software, such as \cite{CocoaSystem,4ti2}. Numeric software might be unsuitable because it will normally produce floating point unit vectors, not integer vectors. 

Consider all possible rows of a non-negative matrix equivalent to $A$, i.e. producing the same statistical model when all the parameters are strictly positive:
\begin{equation*}
  \mathcal B = \setof{b \in \Span_\rationals(A)}{b \ne 0, b \in \integers^n_+} = \setof{b \in \integers^n_+}{b \ne 0, b^TK = 0}.
\end{equation*}
The set $\mathcal B$ is closed for the sum of vectors. It is proved in \cite{schrijver:1986} that a unique inclusion-minimal generating set, called \emph{Hilbert basis}, exists. The Hilbert basis can be computed by symbolic software \cite{CocoaSystem,4ti2}. It is a $\rationals$-generating set but it is usually much larger than a lattice basis.

The following theorem was stated first il \cite{rapallo:2007toric} without a complete proof, see also the discussion in \cite{rauh|kahle|ay:09}.
\begin{theorem}\label{th:hilbert}
Let us consider the set $\mathcal B$ of non-negative and non-zero integer vectors that are orthogonal to $\ker_{\integers} A$ and let $b_1,\dots,b_l$ be its unique Hilbert basis. Define a $l\times n$ matrix $B$ whose rows are the elements of the Hilbert basis. Hence, the extended exponential family is fully parametrized by the $B$-model with non-negative parameters {i.e.} each one of the maximal exposed subsets of the $A$-model is obtained by letting one of the $\zeta_j$'s to be zero.
\end{theorem}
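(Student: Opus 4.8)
The plan is to show that the $B$-model, parametrized with non-negative parameters $\zeta\in\reals^l_{\ge 0}\setminus\{0\}$, coincides exactly with the extended exponential family, i.e.\ the weak closure of \eqref{eq:1}. One inclusion is essentially free: when all $\zeta_j>0$ the $B$-model is the exponential family itself (since the rows of $B$ lie in $\Span_\rationals(A)$ and have the same integer kernel), and for any density $p$ in the extended family Theorem~\ref{th:supports} tells us that $\supp p$ is an exposed subset $S=T^{-1}(F)$ and $p$ lies in the trace on $S$; so it suffices to realize each such trace as a boundary piece of the $B$-model. The key point is therefore the reverse direction, and the heart of the matter is matching \emph{faces of the marginal polytope} with \emph{zero patterns of Hilbert basis vectors}.

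First I would argue that setting $\zeta_j=0$ for $j$ in some index set $I\subset\{1,\dots,l\}$ produces exactly the trace of the exponential family on the support $S_I=\{x : b_j(x)=0 \text{ for all } j\in I\}$, provided $S_I\ne\emptyset$: on this support the surviving monomials $\prod_{j\notin I}\zeta_j^{b_j(x)}$ are strictly positive and the rows $\{b_{j|S_I}\}$ still span (over $\rationals$) the restriction of $\Span(A)$ to $S_I$, so this is a positive exponential family on $S_I$, hence the full trace on $S_I$. Second, I would show that the sets $S_I$ arising this way are precisely the exposed subsets of $\sspace X$: given a face $F$ of $M$ with exposing affine functional $a_0+\sum_j a_j T_j$ that is $\ge 0$ on $\sspace Y$ and $=0$ on $F$, the associated vector $b(x)=a_0+\sum_j a_j T_j(x)$ lies in $\mathcal B$ after clearing denominators and scaling to integers (it is non-negative, non-zero, and orthogonal to $\ker_\integers A$ because it lies in $\Span_\rationals(A)$). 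Expanding $b$ in the Hilbert basis, $b=\sum_j c_j b_j$ with $c_j\ge 0$, and the zero set of $b$ is the intersection of the zero sets of the $b_j$ with $c_j>0$; so $S=T^{-1}(F)=\{b=0\}$ is of the form $S_I$. Combined with Theorem~\ref{th:supports} (every defective support in the extended family is exposed) and Theorem~\ref{th:malago} (every trace over an exposed subset is in the extended family), this gives the set-equality of the $B$-model with the extended exponential family.

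The final clause — that each \emph{maximal} exposed subset is obtained by letting a \emph{single} $\zeta_j$ vanish — is where the minimality of the Hilbert basis is genuinely used. I would prove the contrapositive flavor: the maximal proper exposed subsets $S$ correspond to \emph{facets} of $M$, i.e.\ faces of codimension one, whose exposing functionals are unique up to positive scaling; such a functional, cleared to a primitive integer vector in $\mathcal B$, cannot be written as a sum $b_j+b_{j'}+\cdots$ of two or more nonzero elements of $\mathcal B$ without one of them being a non-negative multiple of it, because any $b\in\mathcal B$ vanishing on a facet must (being non-negative and in $\Span(A)$) be a non-negative multiple of the facet functional. Hence the facet functionals are exactly the indecomposable elements of $\mathcal B$, i.e.\ they belong to the Hilbert basis, and each such $b_j$ on its own defines a maximal exposed subset by setting that one coordinate to zero.

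The main obstacle I anticipate is the bookkeeping in the second paragraph: one must be careful that clearing denominators and rescaling an affine exposing functional really lands inside $\mathcal B$ as defined (non-negativity on all of $\sspace X$, not merely on $\Image T$, and integrality), and that the correspondence $I\leftrightarrow S_I$ is order-reversing in the right way so that "maximal exposed subset" translates to "single vanishing parameter" rather than to "all-but-one vanishing." Handling the degenerate cases — when $S_I=\emptyset$ (no probability defined) or when $1\notin\Span_\rationals(A)$ so that the switch to $[\mathbf 1\,A]$ is needed for homogeneity — is routine but should be stated explicitly.
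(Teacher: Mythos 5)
Your proposal is correct, and its core coincides with the paper's argument: both reduce the theorem to the correspondence between exposed subsets and zero sets of non-negative integer vectors in the row span of $[\mathbf 1\, A]$, obtained by clearing denominators in the exposing affine functional, expanding it in the Hilbert basis with coefficients in $\integers_+$, and observing that its zero set is the intersection of the sets $S_j=\set{b_j=0}$ over the indices with positive coefficient; the link to the extended family through Theorems \ref{th:supports} and \ref{th:malago} is the same. Where you genuinely diverge is the final clause about maximal exposed subsets: the paper gets it from the intersection representation (if $S$ is maximal and $S\subseteq S_j\subsetneq\sspace X$, then $S=S_j$) and leaves the ``one term'' refinement to an unproved Remark, whereas you identify maximal exposed subsets with facets and show that a primitive facet functional is indecomposable in $\mathcal B$ --- since any element of $\mathcal B$ vanishing on the facet must be a multiple of the exposing functional --- hence is itself a Hilbert basis element. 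That is a genuine bonus: it supplies a proof of the paper's Remark. A few points you gloss over that the paper spells out: that the rows of $B$ span the same $\rationals$-space as $\mathbf 1,a_1,\dots,a_m$ (the paper's argument: scale a rational basis to integers and add a constant vector to land in $\mathcal B$), which you only assert parenthetically; the lemma that each $S_j$ is non-empty, which the paper derives from minimality of the Hilbert basis and which justifies that every single $\zeta_j$ may meaningfully vanish; and, to invoke Theorem \ref{th:malago}, the explicit remark that $S_I$ is exposed, with $\sum_{j\in I}b_j$ as an exposing functional. Finally, ``facet functionals are exactly the indecomposable elements'' overstates what you need and contradicts the paper's Remark (the Hilbert basis may contain elements whose zero set is an intersection of other $S_j$'s); only the inclusion ``facet functionals are indecomposable'' is true, and that is all your argument uses.
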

\begin{proof} The constant vector belongs to $\mathcal B$, therefore $1, b_1, \dots, b_l$ is a $\rationals$-vector generating set, possibly non-minimal. In fact, any rational basis of $\Span_\rationals(A)$ becames an integer basis by multiplication by a suitable integer; the integer basis is transformed to an integer positive basis by adding, where needed, a constant integer vector; each of the vector obtained in such a way belong to $\mathcal B$. 

The sets $S_j = \setof{x\in\sspace X}{b_j(x)=0}$, $j=1,\dots,l$ are non empty. In fact, if $m_j = \min_x b_j(x) > 0$, as $b_j(x) \ne 0$ for some $x$, the vector $b_j - m_j\mathbf 1$ belongs to $\mathcal B$, and therefore can be represented as
  \begin{equation*}
    b_j(x) - m_j = \sum_{i=1}^l n_i b_i(x), \quad x\in\sspace X.
  \end{equation*}
If $n_j = 0$, the basis is not minimal. If $n_j \ge 1$, subtracting $b_j(x)$ from both sides, we get, by inspection of the signs of the two sides, that $m_j=0$.

Each of the $S_j$'s is an exposed set of the exponential family. In fact, each element of the Hilbert basis belongs to the row $\rationals-\Span$ of the original matrix $A$, so that
\begin{equation*}
  b_j(x) = \beta_{0j} + \sum_{i=1}^m \beta_{ij} a_i(x), \quad j=1,\dots,l,
\end{equation*}
where $a_i$ is the $i$-th row of $A$. The definition of exposed set is easily checked.

Vice-versa, let $\mathcal S$ be an exposed set, {i.e.}
\begin{equation*}
  b(x) = \beta_0 + \sum_{i=1}^m \beta_{i} a_i(x),
\end{equation*}
with $\mathcal S = \set{x\colon b(x)=0}$ and $b(x) > 0$ for each $x \notin \mathcal S$. As $A$ has integer entries, the coefficients $\beta_0, \beta_1, \dots,\beta_l$ can be chosen to have integer values, therefore $b \in \mathcal B$ and it is a sum of elements of the Hilbert basis, 
\begin{equation*}
  b(x) = \sum_{j=1}^l \alpha_j b_j(x), \quad \alpha_j \in \integers_+,\quad j=1,\dots,l.
\end{equation*}
Therefore, $S = \cap_{j \colon \alpha_j \ne 0} S_j$.\qed
\end{proof}

\begin{remark}
The additive representation of $b$ for maximal exposed sets contains only one term. However, the Hilbert basis might contain an element $b_j$ such that its zero set $S_j$ is the intersection of other $S_j$'s. In such a case, such a $b_j$ could be dropped from the $B$-model without loosing any part of the extended family.
\end{remark}
\section{Examples}

\subsection{4-cycle}
\label{sec:4-cycle}
The 4-cycle is the exponential family
\begin{equation*}
  \expof{\theta_D D + \theta_C C + \theta_B B + \theta_A A + \theta_{BA} BA + \theta_{CB} CB + \theta_{DC} DC + \theta_{AD} AD - \psi(\theta)}
\end{equation*}
where $A,B,C,D$ are the coordinates of $\mathcal X = \set{\pm 1}^4$. The model matrix $A$ and the $B$ matrix are shown in the following edited R output:
\begin{equation*}\tiny
  \begin{array}{c|rrrrrrrrr|cccccccccccccccccccccccc}
  \mathcal X &
I & D & C & B & A & BA & CB & DC & DA & b_{1} & b_{2} & b_{3} & b_{4} & b_{5} & b_{6} & b_{7} & b_{8} & b_{9} & b_{10} & b_{11} & b_{12} & b_{13} & b_{14} & b_{15} & b_{16} & b_{17} & b_{18} & b_{19} & b_{20} & b_{21} & b_{22} & b_{23} & b_{24}\\
++++ & 1 & 1 & 1 & 1 & 1 & 1 & 1 & 1 & 1 & 0 & 1 & 1 & 1 & 0 & 1 & 0 & 0 & 0 & 0 & 0 & 0 & 0 & 0 & 0 & 1 & 1 & 0 & 0 & 1 & 0 & 0 & 1 & 0\\
+++- & 1 & 1 & 1 & 1 & -1 & -1 & 1 & 1 & -1 & 0 & 1 & 1 & 0 & 0 & 0 & 0 & 0 & 0 & 1 & 1 & 1 & 0 & 0 & 0 & 0 & 1 & 0 & 0 & 1 & 0 & 1 & 0 & 0\\
++-+ & 1 & 1 & 1 & -1 & 1 & -1 & -1 & 1 & 1 & 0 & 1 & 0 & 1 & 0 & 0 & 0 & 1 & 0 & 1 & 0 & 0 & 0 & 0 & 0 & 0 & 0 & 0 & 1 & 1 & 0 & 0 & 1 & 1\\
++-- & 1 & 1 & 1 & -1 & -1 & 1 & -1 & 1 & -1 & 0 & 1 & 0 & 0 & 0 & 0 & 0 & 0 & 0 & 1 & 1 & 0 & 0 & 1 & 0 & 0 & 1 & 1 & 1 & 0 & 0 & 0 & 1 & 0\\
+-++ & 1 & 1 & -1 & 1 & 1 & 1 & -1 & -1 & 1 & 1 & 0 & 0 & 1 & 0 & 1 & 1 & 1 & 0 & 0 & 0 & 0 & 0 & 1 & 0 & 1 & 0 & 0 & 0 & 0 & 0 & 0 & 1 & 0\\
+-+- & 1 & 1 & -1 & 1 & -1 & -1 & -1 & -1 & -1 & 1 & 0 & 0 & 0 & 0 & 0 & 1 & 1 & 0 & 1 & 1 & 1 & 0 & 1 & 0 & 0 & 0 & 0 & 0 & 0 & 0 & 1 & 0 & 0\\
+--+ & 1 & 1 & -1 & -1 & 1 & -1 & 1 & -1 & 1 & 1 & 0 & 0 & 1 & 0 & 0 & 0 & 1 & 0 & 0 & 0 & 0 & 1 & 0 & 0 & 1 & 0 & 0 & 0 & 1 & 0 & 1 & 0 & 1\\
+--- & 1 & 1 & -1 & -1 & -1 & 1 & 1 & -1 & -1 & 1 & 0 & 0 & 0 & 0 & 0 & 0 & 0 & 0 & 0 & 1 & 0 & 1 & 1 & 0 & 1 & 1 & 1 & 0 & 0 & 0 & 1 & 0 & 0\\
-+++ & 1 & -1 & 1 & 1 & 1 & 1 & 1 & -1 & -1 & 0 & 0 & 1 & 0 & 1 & 1 & 0 & 0 & 0 & 0 & 0 & 0 & 0 & 1 & 0 & 1 & 1 & 0 & 0 & 0 & 1 & 1 & 0 & 0\\
-++- & 1 & -1 & 1 & 1 & -1 & -1 & 1 & -1 & 1 & 0 & 0 & 1 & 0 & 0 & 0 & 0 & 1 & 1 & 0 & 0 & 1 & 0 & 0 & 0 & 1 & 0 & 0 & 0 & 1 & 1 & 1 & 0 & 0\\
-+-+ & 1 & -1 & 1 & -1 & 1 & -1 & -1 & -1 & -1 & 0 & 0 & 0 & 0 & 1 & 0 & 0 & 1 & 0 & 1 & 0 & 0 & 0 & 1 & 0 & 0 & 0 & 0 & 1 & 0 & 1 & 1 & 0 & 1\\
-+-- & 1 & -1 & 1 & -1 & -1 & 1 & -1 & -1 & 1 & 0 & 0 & 0 & 0 & 0 & 0 & 0 & 1 & 1 & 0 & 0 & 0 & 0 & 1 & 0 & 1 & 0 & 1 & 1 & 0 & 1 & 0 & 1 & 0\\
--++ & 1 & -1 & -1 & 1 & 1 & 1 & -1 & 1 & -1 & 0 & 0 & 0 & 0 & 1 & 1 & 1 & 0 & 0 & 1 & 0 & 0 & 0 & 1 & 1 & 0 & 1 & 0 & 0 & 0 & 0 & 0 & 1 & 0\\
--+- & 1 & -1 & -1 & 1 & -1 & -1 & -1 & 1 & 1 & 0 & 0 & 0 & 0 & 0 & 0 & 1 & 1 & 1 & 1 & 0 & 1 & 0 & 0 & 1 & 0 & 0 & 0 & 0 & 1 & 0 & 0 & 1 & 0\\
---+ & 1 & -1 & -1 & -1 & 1 & -1 & 1 & 1 & -1 & 0 & 0 & 0 & 0 & 1 & 0 & 0 & 0 & 0 & 1 & 0 & 0 & 1 & 0 & 1 & 0 & 1 & 0 & 0 & 1 & 0 & 1 & 0 & 1\\
---- & 1 & -1 & -1 & -1 & -1 & 1 & 1 & 1 & 1 & 0 & 0 & 0 & 0 & 0 & 0 & 0 & 0 & 1 & 0 & 0 & 0 & 1 & 0 & 1 & 1 & 1 & 1 & 0 & 1 & 0 & 0 & 1 & 0\\
\end{array}
\end{equation*}
In this exemple all the $b_j$ vectors are binary vectors, which implies they are all indispensable. The vectors $F_j=1-b_j$ are the indicator functions of the $S_j$ sets. The polynomial representation is (after multiplication by $1/16$):
\begin{equation*}\tiny
  \begin{array}{c|rrrrrrrrrrrrrrrrrrrrrrrr}
 \theta &
F_{1} & F_{2} & F_{3} & F_{4} & F_{5} & F_{6} & F_{7} & F_{8} & F_{9} & F_{10} & F_{11} & F_{12} & F_{13} & F_{14} & F_{15} & F_{16} & F_{17} & F_{18} & F_{19} & F_{20} & F_{21} & F_{22} & F_{23} & F_{24}\\
I & 12 & 12 & 12 & 12 & 12 & 12 & 12 & 8 & 12 & 8 & 12 & 12 & 12 & 8 & 12 & 8 & 8 & 12 & 12 & 8 & 12 & 8 & 8 & 12\\
D & -4 & -4 & 0 & -4 & 4 & 0 & 0 & 0 & 4 & 0 & -4 & 0 & 0 & 0 & 4 & 0 & 0 & 0 & 0 & 0 & 4 & 0 & 0 & 0\\
C & 4 & -4 & -4 & 0 & 0 & 0 & 4 & 0 & 0 & 0 & 0 & 0 & 4 & 0 & 4 & 0 & 0 & 0 & -4 & 0 & -4 & 0 & 0 & 0\\
B & 0 & 0 & -4 & 0 & 0 & -4 & -4 & 0 & 0 & 0 & 0 & -4 & 4 & 0 & 0 & 0 & 0 & 4 & 4 & 0 & 0 & 0 & 0 & 4\\
A & 0 & 0 & 0 & -4 & -4 & -4 & 0 & 0 & 4 & 0 & 4 & 4 & 0 & 0 & 0 & 0 & 0 & 4 & 0 & 0 & 0 & 0 & 0 & -4\\
BA & 0 & 0 & 0 & 0 & 0 & -4 & 0 & 4 & 0 & 4 & 0 & 4 & 0 & -4 & 0 & -4 & -4 & -4 & 0 & 4 & 0 & 4 & -4 & 4\\
CB & 0 & 0 & -4 & 0 & 0 & 0 & 4 & 4 & 0 & 4 & 0 & 0 & -4 & 4 & 0 & -4 & -4 & 0 & 4 & -4 & 0 & -4 & 4 & 0\\
DC & 4 & -4 & 0 & 0 & 0 & 0 & 0 & 4 & 0 & -4 & 0 & 0 & 0 & 4 & -4 & 4 & -4 & 0 & 0 & -4 & 4 & 4 & -4 & 0\\
DA & 0 & 0 & 0 & -4 & 4 & 0 & 0 & -4 & -4 & 4 & 4 & 0 & 0 & 4 & 0 & -4 & 4 & 0 & 0 & -4 & 0 & 4 & -4 & 0\\
 \end{array}
\end{equation*}
e.g.
\begin{equation*}
  F_1 = \frac34 - \frac14 D + \frac14 C + \frac14 DC
\end{equation*}
that is $DC = D-C$ on $S_1$.

The Gr\"obner basis of each ideal $\langle A^2-1, B^2-1, C^2-1, D^2-1,F_j-1\rangle$ reveals in a different way the aliasing induced on each facet.

Next: polynomial representation of the model.

\subsection{Markov chain}
\label{sec:markov-chain}
Let $X_t$, $t=0,1,\dots,n$ be a Markov chain with stationary transitions on the binary state space $\set{0,1}$. Let us denote by $t_x=\probof{X_0=x}$, $x=0,1$, the
initial probability and with $t_{xy} = \probof{X_1 = y | X_0 = x}$, $x,y=0,1$, the transition probabilities. For each trajectory $\omega \in \sspace X = \set{0,1}^{n+1}$ the probability of the trajectory is
\begin{equation}\label{eq:2}
  p(\omega) = t_0^{(1-X_0(\omega))}t_1^{X_0} \prod_{x,y=0}^1 t_{xy}^{N_{xy}(\omega)},
\end{equation}
where $N_{xy}(\omega)$ is the number of transitions from $x$ to $y$ appearing in the trajectory $\omega$.

This in an instance of the toric model of the $\sspace X \times 6$ matrix whose rows are
\begin{equation*}
  \begin{bmatrix}
    (1-X_0) & X_0 & N_{00} & N_{01} & N_{10} & N_{11}
  \end{bmatrix}.
\end{equation*}

Let us compute the confounding, i.e. find the vectors $c \in \reals^6$ such that
\begin{multline*}
  c_0 (1-X_0(\omega)) + c_1 X_0(\omega) \\ + c_{00} N_{00}(\omega) + c_{01}N_{01}(\omega) + C_{10} N_{10}(\omega) + c_{11} N_{11}(\omega) = \alpha, \quad \omega \in \sspace X.
\end{multline*}
for some $\alpha$. Note the following equalities:
\begin{align*}
N_{00} &= \sum_{t=1}^n (1-X_{t-1})(1-X_{t}) = n - X_0 - 2\sum_{t=1}^{n-1} X_{t} - X_n + \sum_{t=1}^n X_{t-1}X_{t}, \\
N_{01} &= \sum_{t=1}^n (1-X_{t-1}) X_{t} = \sum_{t=1}^{n-1} X_{t} + X_n - \sum_{t=1}^n X_{t-1}X_{t}, \\
N_{10} &= \sum_{t=1}^n X_{t-1}(1-X_{t}) = X_0 + \sum_{t=1}^{n-1} X_{t} - \sum_{t=1}^n X_{t-1}X_{t}, \\
N_{11} &= \sum_{t=1}^n X_{t-1}X_{t}. \\
\end{align*}
Expanding the equation for $c$ and observing that the vectors $1$, $X_0$, $\sum_{t=1}^{n-1} X_{t}$, $X_n$, $\sum_{t=1}^nX_{t-1}X_{t}$ are linearly independent, we obtain, equating to zero the coefficient of each vector, that
\begin{gather*}
  c_0 + nc_{00} = \alpha \\
  c_1 - c_0 - c_{00} + c_{10} = 0 \\
  -2c_{00} + c_{01} + c_{10} = 0 \\
  -c_{00} + c_{01} = 0 \\
  c_{00} - c_{01} - c_{10} + c_{11} = 0
\end{gather*}
The solution of the previous system is
\begin{equation*}
  c_0 = c_1, \quad c_{00} = c_{01} = c_{10} = c_{11}.
\end{equation*}

It follows that an identifiable parameterization of the exponential model from \eqref{eq:2} is
\begin{equation}
  \label{eq:3}
    p(\omega) = t_0^{(1-X_0(\omega))}t_1^{X_0} \prod_{x,y=0}^1 t_{xy}^{N_{xy}(\omega)}, \quad t_0+t_1 = 1, \quad \sum_{xy} t_{xy} = 2,
\end{equation}
while the Markov chain model is the submodel
\begin{equation}
  \label{eq:4}
    p(\omega) = t_0^{(1-X_0(\omega))}t_1^{X_0} \prod_{x,y=0}^1 t_{xy}^{N_{xy}(\omega)}, \quad t_0+t_1 = 1, \quad t_{00}+t_{01} = 1, \quad t_{10}+t_{11} = 1.
\end{equation}

The orthogonal space of the model matrix is generated by the vector $k = (n,n,1,1,1,1)$
%

\begin{acknowledgement}
Part of this work was prepared by L. Malag\`o during a visit to the group led by Nihat Ay at Max Planck Institute for Mathematics in the Sciences. L. Malag\`o thanks his PhD supervisor M. Matteucci for support. G. Pistone acknowledges the support of DIMAT Politecnico di Torino, SAMSI NC, and Collegio Carlo Alberto Moncalieri. The authors thank T. Kahle for suggestions and comments on an early draft and S. Sullivan for critical comments about Theorem \ref{th:hilbert}. A shorter version of this paper was presented at SIS2010 Padova.
\end{acknowledgement}

\bibliographystyle{spmpsci}

\end{document}